\newtheorem{theorem}{Theorem}
\renewenvironment{proof}{\noindent{\bf Proof.}}{\hspace*{2mm}~$\square$}
\newcommand{\R}{\mathbb{R}}
\newcommand{\B}{\mathscr{B}}
\newcommand{\ind}{\mathbf{1}}
\newcommand{\n}{\hspace*{-5pt}}
\DeclareMathOperator{\poisson}{Poisson}
\DeclareMathOperator{\uniform}{Uniform}
\DeclareMathOperator{\binomial}{Binomial}
\DeclareMathOperator{\card}{card}
\begin{document}

\begin{frontmatter}
\title{Short proof of the conditioning property for \\ multi-dimensional Poisson point processes}
\runtitle{Conditioning property for multi-dimensional Poisson point processes}
\author{Nicolas Lanchier}
\runauthor{Nicolas Lanchier}
\address{School of Mathematical and Statistical Sciences \\ Arizona State University \\ Tempe, AZ 85287, USA. \\ nicolas.lanchier@asu.edu}
\maketitle

\begin{abstract} \ \
 Poisson processes and one-dimensional Poisson point processes satisfy three main properties: superposition, thinning, and conditioning.
 The proof of the first two relies on basic estimates involving the Poisson distribution that are also true for multi-dimensional Poisson point processes.
 In contrast, the proof of conditioning uses that the distances between consecutive occurrences in time or entities in space are independent and exponentially distributed, which is nonsensical in higher dimensions.
 This paper gives a short proof of the conditioning property for multi-dimensional Poisson point processes.
\end{abstract}

\begin{keyword}[class=AMS]
\kwd[Primary ]{60G55}
\end{keyword}

\begin{keyword}
\kwd{Poisson point processes; Conditioning property; Uniform random variable.}
\end{keyword}

\end{frontmatter}


\section{Poisson and Poisson point processes}
 The Poisson random variable can be viewed as the limit of binomial random variables as the number of trials~$n$ goes to infinity and the success probability scales like~$1/n$, making this random variable the continuous-time analog of the binomial random variable.
 Poisson processes and Poisson point processes are derived from the Poisson random variable, and are used respectively to model the distribution of occurrences in time and the distribution of entities in space.
 In particular, Poisson point processes are extensions of Poisson processes to potentially higher dimensions.
 More precisely, the process~$X = \{X_t : t \in [0, \infty) \}$ is the Poisson process with rate~$\mu$ if it starts at~$X_0 = 0$ and if it has independent Poisson increments, meaning that
\begin{itemize}[leftmargin=36pt]
\item[(A1)] For all~$t_1 < t_2 < \cdots < t_{2k - 1} < t_{2k}$, $X_{t_2} - X_{t_1}, \ldots, X_{t_{2k}} - X_{t_{2k - 1}}$ are independent. \vspace*{4pt}
\item[(A2)] For all~$s < t$, $X_t - X_s = \poisson (\mu (t - s))$.
\end{itemize}
 The random variable~$X_t$ is interpreted as the number of occurrences until time~$t$.
 This concept can be extended to higher dimensions using a counting process
 $$ N = \{N (B) : B \in \B (\R^d) \ \hbox{bounded} \} \quad \hbox{where} \quad \B (\R^d) = \hbox{Borel~$\sigma$-algebra} $$
 and where~$N (B)$ is interpreted as the number of entities in the Borel set~$B$.
 The process~$N$ is the Poisson point process with intensity~$\mu$ if the number of entities, called Poisson points, in nonoverlapping subsets are independent, and the number of entities in a given subset is Poisson distributed.
 More precisely, letting~$\lambda$ refer to the Lebesgue measure,
\begin{itemize}[leftmargin=40pt]
\item[(A1')] For all~$B_1, B_2, \ldots, B_k \in \B (\R^d)$ disjoint, $N (B_1), N (B_2), \ldots, N (B_k)$ are independent. \vspace*{4pt}
\item[(A2')] For all~$B \in \B (\R^d)$ bounded, $N (B) = \poisson (\mu \lambda (B))$.
\end{itemize}
 Note that the Poisson point process~$N$ on the real line~(for~$d = 1$) and the Poisson process~$X$ are mathematically equivalent in the sense that
 $$ X_t = N ([0, t]) \quad \hbox{in distribution \ for all \ $t \geq 0$}. $$
 These two processes satisfy three main properties, called superposition, thinning, and conditioning, that can be described for the one-dimensional Poisson point process~$N$ as follows. \vspace*{4pt} \\
\noindent{\bf Superposition}.
 Having a palette of~$n$ colors, labeled~$i = 1, 2, \ldots, n$, assume that entities with color~$i$ are randomly distributed on the real line according to independent Poisson point processes with intensity~$\mu_i$.
 Then, the set of entities of any color~(what you see if you become colorblind) is distributed according to a Poisson point process with intensity~$\mu_1 + \mu_2 + \cdots + \mu_n$. \vspace*{4pt} \\
\noindent{\bf Thinning}.
 Having a palette of~$n$ colors, labeled~$i = 1, 2, \ldots, n$, and entities randomly distributed on the real line according to a Poisson point process with intensity~$\mu$, paint each entity independently color~$i$ with probability~$p_i$ where~$p_1 + p_2 + \cdots + p_n = 1$.
 Then, the~$n$ sets of entities with different colors are distributed according to independent Poisson point processes with intensity~$\mu p_i$. \vspace*{4pt} \\
\noindent{\bf Conditioning}.
 Having entities randomly distributed on the real line according to a Poisson point process with intensity~$\mu$, given that there are~$n$ entities in~$[0, t]$, the position of these entities is described by~$n$ independent uniform random variables on the interval~$[0, t]$. \vspace*{4pt} \\
 From the point of view of the Poisson process~$X$, this can be formulated as
\begin{theorem}[Conditioning property]
\label{th:poisson}
 Given that~$X_t = n$,
\begin{itemize}
\item let $U_1, U_2, \ldots, U_n$ be independent~$\uniform (0, t)$, \vspace*{4pt}
\item let $\tau_i = \inf \,\{s : X_s = i \}$ for~$i = 1, 2, \ldots, n$ be the times of the occurrences in~$(0, t)$.
\end{itemize}
 Then~$\{U_1, U_2, \ldots, U_n \} = \{\tau_1, \tau_2, \ldots, \tau_n \}$ in distribution.
\end{theorem}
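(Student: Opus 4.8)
The plan is to identify the common law of the random sets~$\{U_1, U_2, \ldots, U_n\}$ and~$\{\tau_1, \tau_2, \ldots, \tau_n\}$ by computing, for each of them, the joint distribution of the numbers of points falling in the blocks of an arbitrary finite partition of~$(0, t)$ into subintervals. Throughout I assume~$\mu, t > 0$, so that~$P (X_t = n) = e^{-\mu t} (\mu t)^n / n! > 0$ and the conditioning is well defined, the other cases being trivial. Both random sets almost surely consist of~$n$ distinct points of~$(0, t)$, and the law of a random finite set of points in~$(0, t)$ is determined by the joint distribution of its counts over all finite families of pairwise disjoint subintervals --- a standard uniqueness statement, such subintervals forming a~$\pi$-system that generates~$\B ((0, t))$. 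It therefore suffices to show that, for every partition~$(0, t) = A_1 \cup A_2 \cup \cdots \cup A_m$ into pairwise disjoint subintervals of lengths~$\ell_j = \lambda (A_j)$, and every~$n_1, n_2, \ldots, n_m \geq 0$ with~$n_1 + n_2 + \cdots + n_m = n$,
$$ P \bigl( \card \{i : \tau_i \in A_j \} = n_j \ \hbox{for all} \ j \mid X_t = n \bigr) = P \bigl( \card \{i : U_i \in A_j \} = n_j \ \hbox{for all} \ j \bigr). $$

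The next step is to evaluate the two sides. The right-hand side is immediate: distributing~$n$ independent~$\uniform (0, t)$ points among the bins~$A_1, A_2, \ldots, A_m$ is a multinomial experiment with cell probabilities~$\ell_j / t$, so it equals~$\binom{n}{n_1, n_2, \ldots, n_m} \prod_{j = 1}^m (\ell_j / t)^{n_j}$. For the left-hand side, set~$N (A_j) = \card \{i : \tau_i \in A_j \}$, which is the increment~$X_{s_j} - X_{s_{j - 1}}$ when~$A_j = (s_{j - 1}, s_j]$. By (A1) the variables~$N (A_1), N (A_2), \ldots, N (A_m)$ are independent; by (A2) each~$N (A_j)$ is~$\poisson (\mu \ell_j)$; and~$N (A_1) + N (A_2) + \cdots + N (A_m) = X_t$ because the~$A_j$ partition~$(0, t)$. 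Using the definition of conditional probability together with~$\ell_1 + \ell_2 + \cdots + \ell_m = t$, the left-hand side equals
$$ \frac{\prod_{j = 1}^m e^{-\mu \ell_j} (\mu \ell_j)^{n_j} / n_j!}{e^{-\mu t} (\mu t)^n / n!} = \frac{n!}{\prod_{j = 1}^m n_j!} \, \prod_{j = 1}^m \Bigl( \frac{\ell_j}{t} \Bigr)^{n_j}, $$
which is precisely the multinomial probability found above, and the argument is complete.

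I expect the only real obstacle to be the reduction carried out in the first paragraph: one has to be precise about the claim that the partition count probabilities pin down the law of the random set, i.e., invoke the uniqueness theorem for point-process laws on a generating~$\pi$-system and note that, in the absence of ties, the random set and its associated counting measure carry the same information. Everything that follows is just~(A1), (A2), and the elementary identity relating a ratio of Poisson weights to a multinomial weight; in particular the computation never appeals to the order structure of the line, which is exactly what allows the same scheme to be transcribed to~$\R^d$. (In dimension one there is also a shortcut: order~$\tau_1 < \tau_2 < \cdots < \tau_n$, then from the probability that~$\tau_i \in (u_i, u_i + h]$ for all~$i$, with~$0 < u_1 < u_2 < \cdots < u_n < t$, let~$h \downarrow 0$ to read off the joint density~$n! / t^n$ on the simplex~$\{0 < u_1 < u_2 < \cdots < u_n < t \}$, which is the density of the order statistics of~$n$ independent~$\uniform (0, t)$ variables; but that route uses the order of the line and does not generalize.)
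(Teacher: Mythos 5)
Your proof is correct, and its engine is the same one the paper uses: independence of counts over disjoint regions plus the algebraic identity turning a ratio of Poisson weights into a binomial/multinomial weight. The differences are worth noting, though. The paper never actually proves Theorem~\ref{th:poisson} in the text (it defers to the classical argument via exponential spacings, which is exactly the order-statistics shortcut you sketch in your closing parenthesis); its new proof is given only for the $d$-dimensional statement, and there it checks only the \emph{one-set marginal} $P(\card(A \cap \{U_i\}) = k) = P(\card(A \cap \{P_i\}) = k)$ for a single test set $A$, explicitly weakening ``equal in distribution'' to mean just that. You instead compute the full joint law of the counts over an arbitrary finite partition $(0,t) = A_1 \cup \cdots \cup A_m$, obtaining the multinomial distribution with cell probabilities $\lambda(A_j)/t$ on both sides, and then invoke the standard uniqueness theorem for simple point processes on a generating $\pi$-system. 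That extra work is what actually justifies the unqualified conclusion $\{U_1,\ldots,U_n\} = \{\tau_1,\ldots,\tau_n\}$ in distribution as stated in Theorem~\ref{th:poisson}; the paper's single-set computation, taken literally, only identifies the marginals $\card(A \cap \{\cdot\})$ one set at a time. Your reliance on the uniqueness theorem is a citation rather than a proof, but it is a standard fact and an acceptable black box here. You are also right that nothing in your computation uses the order structure of $(0,t)$, so the same multinomial argument transcribes verbatim to bounded Borel subsets of $\R^d$, which would in fact strengthen the paper's Theorem~2 to genuine equality in distribution of the point configurations.
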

\noindent
 Superposition and thinning can be viewed as converse of each other.
 Their proof relies on basic estimates involving the Poisson random variable that are not sensitive to the dimension, so they are also true for multi-dimensional Poisson point processes.
 In contrast, the main ingredient to prove the conditioning property is that the distances between consecutive occurrences in time or entities in space are independent and exponentially distributed with parameter~$\mu$, a property that is nonsensical in higher dimensions~(see~\cite[Lemma~9.4]{lanchier_2017} or~\cite[Theorem~5.2]{ross_2010} for a proof).
 Also, though the conditioning property is expected to hold for multi-dimensional Poisson point processes, the traditional proof in one dimension does not extend to higher dimensions, and the author could not find any proof in the probability literature.
 The main objective of this paper is to give a short proof of the conditioning property for multi-dimensional Poisson point processes.
 

\section{The Conditioning property}
 To extend Theorem~\ref{th:poisson} to higher dimensions, we let~$N$ be the~$d$-dimensional Poisson point process with intensity~$\mu$, and we let~$B$ be a bounded Borel subset of~$\R^d$.
 The uniform random variable on this Borel set is the continuous random variable with density function
 $$ \phi : \R^d \to \R_+ \quad \hbox{defined as} \quad \phi = \ind_B / \lambda (B). $$
 Then, we have the following result that the author also explains in the video~\cite{lanchier_2024}.
\begin{theorem}[Conditioning property]
 Given that~$N (B) = n$,
\begin{itemize}
\item let $U_1, U_2, \ldots, U_n$ be independent~$\uniform (B)$, \vspace*{4pt}
\item let $P_1, P_2, \ldots, P_n \in \R^d$ be the Poisson points in~$B$.
\end{itemize}
 Then~$\{U_1, U_2, \ldots, U_n \} = \{P_1, P_2, \ldots, P_n \}$ in distribution in the sense that
 $$ P (\card (A \cap \{U_1, U_2, \ldots, U_n \}) = k) = P (\card (A \cap \{P_1, P_2, \ldots, P_n \}) = k) $$
 for all Borel subsets~$A \subset B$ and all~$k = 0, 1, \ldots, n$.
\end{theorem}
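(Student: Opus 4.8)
The plan is to condition on $N (B) = n$, fix an arbitrary Borel set $A \subset B$, and show that the count $\card (A \cap \cdot)$ has the \emph{same} law on both sides, namely~$\binomial (n, p)$ with $p = \lambda (A) / \lambda (B)$; this is exactly the asserted identity. Throughout one assumes $0 < \lambda (B) < \infty$, which is forced by~$B$ being bounded and by the uniform law on~$B$ being well defined, and which also guarantees $P (N (B) = n) > 0$ so that the conditioning makes sense.

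First I would dispatch the uniform points, which is immediate: $\card (A \cap \{U_1, \ldots, U_n \}) = \sum_{i = 1}^n \ind_{\{U_i \in A \}}$ is a sum of~$n$ independent Bernoulli trials with success probability $\int_A \phi \, d\lambda = \lambda (A) / \lambda (B) = p$, hence it is $\binomial (n, p)$.

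Next I would handle the Poisson points. On the event $\{N (B) = n \}$ one has $\card (A \cap \{P_1, \ldots, P_n \}) = N (A)$ since $A \subset B$ and $\{P_1, \ldots, P_n \}$ are precisely the Poisson points in~$B$. Writing $A' = B \setminus A$, so that~$A$ and~$A'$ are disjoint with $\lambda (A) + \lambda (A') = \lambda (B)$, I would use finite additivity $N (A) + N (A') = N (B)$ of the counting process together with the independence of $N (A)$ and $N (A')$ from~(A1') to write, for $0 \leq k \leq n$,
$$ P (N (A) = k \mid N (B) = n) = \frac{P (N (A) = k) \, P (N (A') = n - k)}{P (N (B) = n)}; $$
substituting the Poisson probabilities from~(A2') and using $\lambda (A) + \lambda (A') = \lambda (B)$, the exponential factors and the powers of~$\mu$ all cancel, leaving $\binom{n}{k} p^k (1 - p)^{n - k}$. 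Comparing with the previous paragraph completes the proof.

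I do not expect a genuine obstacle: the whole point is that this argument bypasses the one-dimensional apparatus (ordering the points, inter-point distances, the exponential law) entirely, so the only step deserving a word of care is the well-definedness of the conditional probability noted above. If one wants the stronger conclusion that the two random sets agree as point processes, the very same computation applied to a finite Borel partition $B = A_1 \sqcup \cdots \sqcup A_m$ shows that $(N (A_1), \ldots, N (A_m))$ given $N (B) = n$, and likewise the vector of counts of the~$U_i$, are both multinomial with parameters~$n$ and $(\lambda (A_1) / \lambda (B), \ldots, \lambda (A_m) / \lambda (B))$, which by the standard fact that finite-dimensional distributions determine the law of a point process pins down the common distribution.
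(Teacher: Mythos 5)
Your proof is correct and follows essentially the same route as the paper: the uniform count is binomial by independent Bernoulli trials, and the Poisson count given $N(B)=n$ is computed to be the same binomial via the disjoint split $B = A \sqcup (B\setminus A)$, axioms (A1')--(A2'), and cancellation of the exponentials. Your added remarks on the well-definedness of the conditioning and on the multinomial extension to finite partitions are sensible refinements but do not change the argument.
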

\begin{proof}
 Let~$a = \lambda (A)$ and~$b = \lambda (B)$.
 By definition of the uniform random variable,
 $$ \begin{array}{c} P (U_i \in A) = \int \phi \,\ind_A \,d\lambda = \lambda (A) / \lambda (B) = a/b \quad \hbox{for all} \quad i = 1, 2, \ldots, n. \end{array} $$
 In particular, using also independence, for all~$k = 0, 1, \ldots, n$,
\begin{equation}
\label{eq:uniform}
\begin{array}{rcl}
  P (\card (A \cap \{U_1, U_2, \ldots, U_n \}) = k) & \n = \n & P (\card \{i = 1, 2, \ldots, n : U_i \in A \} = k) \vspace*{4pt} \\
                                                    & \n = \n & P (\binomial (n, a/b) = k). \end{array}
\end{equation}
 Now, because~$A \cap (B \setminus A) = \varnothing$, it follows from axiom~(A1') that
\begin{equation}
\label{eq:poisson1}
\begin{array}{l}
  P (\card (A \cap \{P_1, P_2, \ldots, P_n \}) = k \,| \,N (B) = n) \vspace*{4pt} \\ \hspace*{20pt} =
  P (\card \{i = 1, 2, \ldots, n : P_i \in A \} = k \,| \,N (B) = n) \vspace*{4pt} \\ \hspace*{20pt} =
  P (N (A) = k \,| \,N (B) = n) \vspace*{4pt} \\ \hspace*{20pt} =
  P (N (A) = k, N (B) = n) / P (N (B) = n) \vspace*{4pt} \\ \hspace*{20pt} =
  P (N (A) = k, N (B \setminus A) = n - k) / P (N (B) = n) \vspace*{4pt} \\ \hspace*{20pt} =
  P (N (A) = k) P (N (B \setminus A) = n - k) / P (N (B) = n). \end{array}
\end{equation}
 Using also that~$\lambda (B \setminus A) = \lambda (B) - \lambda (A) = b - a$ and axiom~(A2') of the definition of a Poisson point process, then simplifying, the last term in~\eqref{eq:poisson1} becomes
\begin{equation}
\label{eq:poisson2}
\begin{array}{l}
\displaystyle \bigg(\frac{\mu^k a^k \,e^{- \mu a}}{k!} \bigg) \bigg(\frac{\mu^{n - k} (b - a)^{n - k} \,e^{- \mu (b - a)}}{(n - k)!} \bigg) \bigg/ \bigg(\frac{\mu^n b^n \,e^{- \mu b}}{n!} \bigg) \vspace*{12pt} \\ \hspace*{40pt} =
\displaystyle \bigg(\frac{a^k}{k!} \bigg) \bigg(\frac{(b - a)^{n - k}}{(n - k)!} \bigg) \bigg/ \bigg(\frac{b^n}{n!} \bigg) =
\displaystyle \frac{n!}{(n - k)! \,k!} \ \bigg(\frac{a^k (b - a)^{n - k}}{b^k \,b^{n - k}} \bigg) \vspace*{12pt} \\ \hspace*{40pt} =
\displaystyle {n \choose k} \bigg(\frac{a}{b} \bigg)^k \bigg(1 - \frac{a}{b} \bigg)^{n - k} =
\displaystyle P (\binomial (n, a/b) = k). \end{array}
\end{equation}
 The result follows noticing that the last terms in~\eqref{eq:uniform} and~\eqref{eq:poisson2} are equal.
\end{proof}


\end{document}